\newtheorem{thm}{Theorem}[section]
\newtheorem{prop}[thm]{Proposition}
\theoremstyle{definition}
\theoremstyle{definition}
\newtheorem{defi}[thm]{Definition}
\theoremstyle{remark}
\newtheorem{rem}[thm]{Remark}
\newcommand\be{\begin{equation}}
\newcommand\ee{\end{equation}}
\newcommand\ben{\begin{enumerate}}
\newcommand\een{\end{enumerate}}
\numberwithin{equation}{section}
\newcommand{\R}{\ensuremath{{\bf R}}}
\newcommand{\C}{\ensuremath{{\bf C}}}
\newcommand{\Z}{\ensuremath{{\bf Z}}}
\newcommand{\HH}{\ensuremath{{\bf H}}}
\newcommand{\Q}{{\bf Q}}
\newcommand{\g}{{\mathfrak g}}
\newcommand{\LL}{{\mathscr L}}
\newcommand{\M}{{\mathscr M}}
\title{A note on mock automorphic forms and the BPS index}
\author{Tian An Wong}
\email{tiananw@umich.edu}
\address{University of Michigan-Dearborn, Dearborn, MI 48128, USA}
\subjclass[2010]{11F12, 11R29, \and  83C57}
\keywords{Mock modular forms, automorphic representations, BPS index}
\begin{document}

\begin{abstract}
We discuss mock automorphic forms from the point of view of representation theory, that is, obtained from weak harmonic Maa{\ss} forms give rise to nontrivial $(\g,K)$-cohomology. We consider the possibility of replacing the `holomorphic' condition with `cohomological' when generalizing to general reductive groups. Such a candidate allows for growing Fourier coefficients, in contrast to automorphic forms under the Miatello-Wallach conjecture. In the second part, we provide an overview of the connection with BPS black hole counts as a physical motivation for studying mock automorphic forms. 
\end{abstract}

\maketitle


The study of mock modular forms began with Ramanujan's introduction of mock theta functions, but precisely because of their lack of modularity, they were not well-understood and their immediate applications were not clear to many. More recently, largely motivated by problems in physics, mock modular forms have become a subject of increasing importance.

We shall consider in this note mock modular forms in the context of automorphic forms on reductive groups,  motivated by certain expectations in string theory, which we describe below. We will mainly be concerned with mock modular forms defined by weak harmonic Maa{\ss} forms (Definition \ref{mmfdef}); roughly speaking, the latter are modular but not quite holomorphic, while the former are holomorphic but not quite modular. Our candidate (Definition \ref{mockautdef}) replaces `holomorphic' with `cohomological', and we show that this is consistent with the previous definition in the case of $SL_2$. Beyond mock modular forms, other important but more complicated classes such as mixed mock modular forms and mock Jacobi forms also exist, but for simplicity we have not included these in the discussion.

In Section \ref{mocksection} we discuss mock modular forms from the point of view of representation theory and a candidate mock automorphic form; the reader not interested in the physical motivations may confine themselves to this section alone.  In Section \ref{BPSsection} we review the connection betwen BPS states and automorphic forms, discuss BPS jumping loci in relation to Kudla's generating series of special cycles on Shimura varieties. and introduce a naive---though natural---definition of an $L$-function associated to the partition function of BPS states.


\section{Mock modular forms and representation theory}
\label{mocksection}

In this section we recall basic notions of mock modular forms, and their interpretation in the representation theoretic setting. Then using this framework we discuss generalizations to higher rank groups.

\subsection{The classical theory}

Many excellent surveys of mock modular forms already exist in the literature, for example \cite{Duke,Ono,Folsom,ZagBourbaki}. We only give a general overview here, as a framework for the general case. 
The notion of a weakly holomorphic Maa{\ss} form was introduced by Brunier and Funke \cite{BF}. Let $(\rho,V)$ be a finite-dimensional complex representation of a finite-index subgroup $\Gamma$ of $SL_2(\Z)$. A (weak) Maa{\ss} form of weight $k\in\Z$ and type $(\rho,V)$ is a smooth function $F:\HH\to V$ such that
\[
F(\gamma \tau) = (c\tau + d)^k\rho(\gamma)F(\tau), \qquad \forall\gamma=\begin{pmatrix}a&b\\c&d\end{pmatrix}\in\Gamma
\]
and for some $B>0$,
\[
F(\tau) = O(e^{Bv})
\]
$v\to\infty$ uniformly in $u$, where $\tau = u + iv$ (and a similar condition holds at all cusps of $\Gamma$). Moreover, we require $F$ to be an eigenfunction of the weight $k$ Laplace operator,
\[
\Delta_k = -v^2\Big(\frac{\partial^2}{\partial u^2}+\frac{\partial^2}{\partial v^2}\Big) +ikv\Big(\frac{\partial}{\partial u}+i\frac{\partial}{\partial v}\Big).
\]
and we call $F$ harmonic if the eigenvalue is 0. Harmonic Maa{\ss} forms have also been defined for half-integral weight $k$, in which case one requires $\Gamma\subset \Gamma_0(4)$.


Let $F$ be a scalar-valued harmonic Maa{\ss} form of weight $k\in \Z\backslash\{1\}$. Solving the differential equations imposed by the harmonicity, the Fourier expansion of $F$ has a unique decomposition into $F=F^++F^-$, where
\begin{align}
\label{harmonicmaass}
F^+(\tau) &= \sum_{n\gg -\infty}c^+(n)q^n,\notag\\
F^-(\tau) &= c^-(0)v^{1-k}+\sum_{n\ll\infty}c^-(n)W_k(2\pi nv)q^n
\end{align}
where $W_k$ is the real-valued incomplete Gamma function. The functions $F^+$ and $F^-$ are referred to as the holomorphic and non-holomorphic parts of $F$, respectively. See \cite[\S3]{BF} for details. 

\begin{defi}
\label{mmfdef}
We call the holomorphic part $F^+$ a {\em mock modular form}. The non-holomorphic part $F^-$ is called the {\em completion}, obtained as the non-holomorphic Eichler integral of a weight $2-k$ modular form $f$:
\[
F^-(\tau)=(i/2)^{k-1}\int_{-\bar{\tau}}^\infty(z+\tau)^{-k}\bar{f}(z)dz
\]
Also, the image of the derivative operator
\[
\xi_k(F):= 2iv^k\frac{\overline{\partial}}{\partial\bar{\tau}}(F)
\]
is a cusp form of weight $2-k$ by \cite[Proposition 3.2]{BF}, which will be called the {\em shadow} of $F$, and is proportional to $f$. See also \cite[\S7.1]{DMZ}
\end{defi}

Let $M^!_k$ be the space of weight $k$ weakly holomorphic modular forms, $\hat{\mathbb M}_k$ the space of weight $k$ weak harmonic Maa{\ss} forms, and $M_{2-k}$ the space of weight $k$ classical modular forms. They fit into an exact sequence
\[
0 \longrightarrow M^!_k \longrightarrow \hat{\mathbb M}_k \stackrel{\xi_k}{\longrightarrow} M_{2-k} \longrightarrow 0.
\]
Denoting by $\mathbb M_k$ the space of mock modular forms, then the completion of $F^+$ by $F^-$ induces an isomorphism $\mathbb M_k\simeq \hat{\mathbb M}_k$. Also note that $\xi_k = v^{2-k}\bar{L}$, where $L$ is the Maa{\ss} lowering operator.


\begin{rem}
\label{DMZmmf}
In a similar framework, Zwegers \cite{Zwe} considered mock Jacobi forms $\mu$, which transform like non-holomorphic weight $\frac12$ Jacobi forms. 
 Zwegers' $\mu$ functions can be completed again to harmonic Maa{\ss} forms, and Ramanujan's mock theta functions can be expressed in terms of $\mu$ and ordinary modular forms.

Dabholkar, Murthy, and Zagier \cite{DMZ} gave a more general definition that captures both mock modular forms and mock Jacobi forms. Namely, they define a mock modular form to be the holomorphic part of a weight $k$ real-analytic modular form whose $\bar\tau$ derivative is decomposable \cite[\S7.3]{DMZ}, that is, they belong to the space
\[
\bigoplus_{r\in\Z} \tau^r M_{k+r}\oplus \overline{M}_{l+r}.
\]
Here $M_k$ is the space of modular forms of weight $k$. But we will not consider mixed mock modular forms nor Jacobi forms in what follows.
\end{rem}

\subsection{Transition to representation theory}

To look for generalizations of mock modular forms to higher rank groups, we must translate the current setting to the language of representation theory. There are several steps required in order to make the transition. 

For the remainder of this section, we set $G=SL_2(\R)$, and $K=SO(2)$. Our Maa{\ss} form $F$ begins as a function on $\HH$. Making the identification $\HH\simeq G/K$, we first lift $F$ to $G$ by defining
\[
\tilde{F}(g) := (cz+d)^{-k}F(g(i))
\]
where as usual where $g\in G$ acts on $i$ by M\"obius transformation. Then $\tilde{F}:G\to V$ satisfies
\begin{align}
\label{automorphy}
\tilde{F}(\gamma g) &= \rho(\gamma) \tilde{F}(g), &&\forall\gamma\in\Gamma\\
\label{kfinite}
\tilde{F}(gk_\theta) &= e^{ik\theta}\tilde{F}(g), &&\forall k_\theta=\begin{pmatrix}\cos\theta & \sin\theta\\-\sin\theta&\cos\theta\end{pmatrix}\in K\\
\tilde{F}(g) &= O(e^{Bv}), 
\end{align}
for some $B>0$ uniformly in $u$ as $v\to\infty$, where $g(i) = u+iv$. Note that if instead $\tilde{F}(g) = O(v^{B})$, then $\tilde{F}$ is said to have moderate growth.


After the pioneering work of Selberg, Maa{\ss} forms can be obtained as vectors in a certain principal series representation of $G$, written as the induced representation space
\[
I(\chi_\infty)=\text{ Ind}_{B(\R)}^{G}\chi_\infty
\]
where $\chi_\infty$ is a character of $\R^\times\to \C^\times$, and $B$ the subgroup of upper triangular matrices in $G$. Also introduce a complex parameter $s\in \C$, and write $
\chi_s(g) = \chi_\infty(g) |g|^s.$ Then taking any vector $f_s$ in $I(\chi_\infty)$, we form the Eisenstein series
\[
E(s,g) = \sum_{\gamma \in B(\Z)\backslash SL_2(\Z)} f_s(\gamma g).
\]
The special case $f_s(g) = \chi_s(g)$ corresponds to the classical Eisenstein series.  This is an example of a non-holomorphic Maa{\ss} form. 


Now, viewing $I(\chi_\infty)$ as a $(\g,K)$-module, we can associate to any Maa{\ss} form a representation $\pi_\infty$ of $G$. If $F$ is a weak harmonic Maa{\ss} form, the asscociated $I(\chi_\infty)$ is a certain degenerate principal series representation. The $(\g,K)$-modules associated to integral weight weak harmonic Maa{\ss} forms have been classified by Bringmann and Kudla \cite{BK}, and also Schulze-Pillot \cite{SP} for certain half-integral weights. As was already noted by these authors, the weak growth conditions allow for indecomposable but not irreducible Harish-Chandra modules. 

Let $A(G,V;\Gamma)$ be the space of $V$-valued smooth $K_\infty$-finite functions on $G$ satisfying the property (\ref{automorphy}). It is a $(\g,K)$-module, and the lifted Maa{\ss} form $\tilde{F}$ generates a sub-$(\g,K)$-module in $A(G,V;\Gamma)$. Then comparing this with the classification of standard $(\g,K)$-modules on $G$, \cite[Theorem 5.2]{BK} gives a complete description of the nine possible $(\g,K)$-modules generated by $\tilde{F}$, and all are shown to exist.

\subsection{Generalizations}

We now turn our attention to reductive groups other than $SL_2$. In particular, we do not consider the Jacobi group, which is not reductive. 

\subsubsection{Siegel mock modular forms}
Following Westerholt-Raum \cite{WR}, define the space of harmonic weak Siegel Maa{\ss} forms to be the space of real-anlaytic functions with possible meromorphic singularities that transform like Siegel modular forms and that are mapped to non-holomorphic Saito-Kurokawa lifts under vector-valued lowering operators. The Saito-Kurokawa lift can be viewed as a particular theta lift of weight $k$ cusp forms to a cuspidal, real-analytic Siegel modular form.

Given this definition, the associated $(\g,K)$-module is then identified as the Langlands quotient of a suitable degenerate principal series representation. Then the method of \cite{BF} can be adapted to characterize weak harmonic Siegel Maa{\ss} forms \cite[Theorem 1]{WR}. Then analogous to (\ref{harmonicmaass}), \cite[\S4.3]{WR} introduces a definition of mock Siegel modular forms using a decomposition of harmonic weak Siegel Maa{\ss} forms into meromorphic and non-holomorphic parts. 

A different Siegel mock modular form has been constructed by Kudla, Rapoport, and Yang \cite{KRY} as follows: let $B$ be a rational indefinite quaternion algebra, and $\M_B$ be the moduli space over Spec$(\Z)$ of abelian surfaces $A$ with endomorphism $\iota:\mathcal O_B\to \mathrm{End}(A)$. Given $T\in\mathrm{Sym}^2(\Z)$, let $Z(T)$ be the locus on $\M_B$ such that 
\[ \phi_2^B(\tau):= \sum_{T\text{ good}} \widehat{\deg}Z(T)e(\mathrm{tr}(T\tau)) \]
for $\tau$ in the Siegel upper half plane. $T$ is called good in the sense that $Z(T)=\mathrm{Spec}R(T)$ is a 0-cycle, and $\widehat{\deg}Z(T):=\log|R(T)|$. It can be completed to the value at $s=0$ of the first derivative of a Siegel-Eisenstein series of weight $\frac32$ and genus 2,
\[ \frac{d}{ds}\Big|_{s=0}E(\tau,s,B)=\phi_2^B(\tau) + \sum_{T\text{ not good}}c(T)e(\mathrm{tr}(T\tau). \]
Thus $\phi_2^B(\tau)$ is called a Siegel mock modular form, in the sense that it is holomorphic, and can be completed to a non-holomorphic function that transforms like an automorphic form on the metaplectic group Mp$_4(\R)$.

\subsubsection{Growth conditions}

First, by the G\"{o}tzky-Koecher principle for Siegel modular form of genus $g>1$ and Hilbert modular forms, the regularity condition at cusps of the Siegel (resp. Hilbert) modular variety is a consequence of the holomorphy and automorphy conditions on the variety itself. Indeed, this principle has been generalized to PEL Shimura varieties by Lan \cite{Lan}, for all vector-valued weights and higher coherent cohomology groups.

More generally, let $G$ be a real reductive group with compact center, $K$ a maximal compact subgroup, and $\Gamma$ a discrete cofinite subgroup. Then an automorphic form on $G$ is a function $f\in C^\infty(G/\Gamma)$ such that
\begin{align}
&\dim\mathrm{span}_\C(R_Kf) <\infty \\
&\dim Z(\g)f<\infty\\
&|xf(g)|\leq C(x)||g||^B,\quad \forall x\in U(\g) \label{mwconj}
\end{align}
for some $B>0$ and constant $C(x)$. Here $R_gf(x) := f(xg)$. Then the conjecture of Miatello and Wallach \cite{MW} states that if $G$ is a semisimple reductive group of split rank $>1$ over $\R$, and $\Gamma$ an irreducible subgroup, then the condition (\ref{mwconj}) is redundant.

Miatello and Wallach show that the conjecture is true for the group $SO(n,1)$ over a nontrivial totally real extension of $\Q$. More recently, Miller and Trinh study the conjecture in the case of $SL_3$:

\begin{prop}[\cite{MT}]
\label{MT}
Given a Maa{\ss} form $F$ on $SL_3(\Z)\backslash SL_3(\R)/SO(3)$, then:
\begin{enumerate}
\item
If the Fourier series of $F$ converges absolutely, then it satisfies the Miatello-Wallach conjecture,
\item
If $F$ has at most exponential growth at cusps, then only Jacquet's Whittaker function occurs in its Whittaker expansion.
 \end{enumerate}
 \end{prop}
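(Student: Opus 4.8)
The plan is to study $F$ through its Fourier expansion along the unipotent radical $N$ of the minimal $\Q$-parabolic $B=TN$ of $SL_3$, and to classify which local factors can occur. In Iwasawa coordinates $g=nak$, with $a$ in the positive chamber $A^+$, I would write
\[
F(g)=\sum_{\psi}F_\psi(g),\qquad F_\psi(g)=\int_{N(\Z)\backslash N(\R)}F(ng)\,\overline{\psi(n)}\,dn,
\]
the sum over the characters $\psi=\psi_{m_1,m_2}$ of $N(\Z)\backslash N(\R)$ indexed by $(m_1,m_2)\in\Z^2$. First I would dispose of the degenerate terms $m_1m_2=0$: these are built from the constant terms of $F$ along $B$ and along the two maximal parabolics $P_{(2,1)},P_{(1,2)}$; the constant term along $B$ is a combination of the (at most six) quasi-characters of $A$ in the Weyl orbit of the infinitesimal character of $F$, hence automatically of moderate growth, while the maximal-parabolic constant terms descend to $GL_2$-data to be treated one rank down. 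The heart of the matter is the non-degenerate terms: for $m_1m_2\ne 0$, the function $a\mapsto F_\psi(a)$ is, up to the normalizing $\psi$-twist, a solution on $A^+$ of the $\mathfrak{sl}_3$ quantum Toda system cut out by the action of $Z(\g)$ on $F$. This system is holonomic with a six-dimensional solution space, and I would fix the basis of six standard solutions (the Weyl translates of the Jacquet integral, or the Hashizume--Kostant power-series solutions adapted to the walls) together with the asymptotic dichotomy: Jacquet's Whittaker function $W^{\mathrm{Jac}}$ is, up to scalar, the unique solution of moderate growth---in fact it is rapidly decreasing as $a\to\infty$ in $A^+$---while every other standard solution grows at least like $\exp(c\,y_i)$ in some simple-root coordinate $y_i$, that is, faster than any exponential of the Riemannian distance $d(gK,eK)$.

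With this in hand, part (2) is immediate: if $|F(g)|\ll e^{C\,d(gK,eK)}$ on a Siegel set, then each non-degenerate $F_\psi$ obeys the same bound (it is an average of right translates of $F$), so its expansion in the six-dimensional basis can only involve $W^{\mathrm{Jac}}$, the remaining five solutions being excluded by their super-exponential growth. Hence only Jacquet's Whittaker function appears in the Whittaker expansion of $F$, the degenerate coefficients---not part of that expansion---being controlled separately as above.

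For part (1), I would assume the Fourier series of $F$ converges absolutely and locally uniformly; this first legitimizes term-by-term differentiation, so that $xF=\sum_\psi xF_\psi$ with absolutely convergent right-hand side for each $x\in U(\g)$. The key step---and the one I expect to be the main obstacle---is to rule out a non-Jacquet solution occurring in some $F_{\psi_0}$: for fixed $\psi_0$ such a summand would, by the dichotomy, blow up faster than $\exp(c\,y_i)$ along a ray into $A^+$, and I would feed this into the full $\Gamma=SL_3(\Z)$-invariance---via the interplay of $F_{\psi_0}$ with the constant terms along the maximal parabolics, on which absolute convergence imposes at most polynomial growth---to reach a contradiction. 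Granting that every non-degenerate $F_\psi$ is then a scalar multiple of $W^{\mathrm{Jac}}$, I would conclude by bounding $|xF(g)|$ on a Siegel set by $\sum_\psi|xW^{\mathrm{Jac}}_\psi(g)|$ times the Fourier coefficients, using the rapid decay of $W^{\mathrm{Jac}}$ (uniformly in $\psi$ up to polynomial factors in $m_1,m_2$), absolute convergence, and the already-controlled degenerate part, to obtain $|xF(g)|\ll\|g\|^{B}$ with $B=B(x)$---which is exactly the estimate \eqref{mwconj}, so that the Miatello--Wallach conjecture \cite{MW} holds for $F$.

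The hardest point, as flagged, is the quantitative asymptotics behind the dichotomy: one needs lower bounds on the non-Jacquet solutions of the $\mathfrak{sl}_3$ Toda system that are uniform both in the character $\psi_{m_1,m_2}$ and in the direction of approach to the cusp---in particular along the walls, where the system degenerates and the six standard solutions have to be re-expanded---because it is precisely this uniformity that lets one play local growth against global $\Gamma$-invariance in part (1). The accompanying connection-coefficient problem, writing the wall-adapted solutions in the Jacquet basis, is the technical core of \cite{MT}.
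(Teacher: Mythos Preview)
This proposition is not proved in the paper at all: it is quoted from the Miller--Trinh preprint and carries the attribution \cite{MT} in its header. The paper invokes it only as a black box, in the proof of the subsequent proposition on mock automorphic forms for $SL_3$, and supplies no argument of its own. There is therefore no ``paper's own proof'' against which to compare your sketch.

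Your outline is a reasonable reconstruction of how such a result ought to go, but one point needs correcting. In part~(2) you render ``at most exponential growth at cusps'' as $|F(g)|\ll e^{C\,d(gK,eK)}$ and then exclude the five non-Jacquet solutions because they grow faster than any such exponential. But a bound exponential in the Riemannian distance is essentially moderate growth (polynomial in $\|g\|$), and under that hypothesis the conclusion is the standard theory of automorphic Whittaker expansions, with no new content. In the conventions used throughout this paper---see the weak Maa{\ss} form condition $F(\tau)=O(e^{Bv})$ for $SL_2$---``exponential growth at cusps'' means exponential in the Iwasawa height coordinates $y_i$, which is \emph{doubly} exponential in $d(gK,eK)$. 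Under that correct, weaker hypothesis the non-Jacquet solution attached to $\psi_{m_1,m_2}$, which grows like $\exp(c|m_i|y_i)$, is in the same regime as the assumed bound $O(e^{By_i})$ and is excluded by size alone only once $|m_i|$ is large; handling the remaining small indices genuinely requires the interplay with $\Gamma$-invariance that you reserved for part~(1). So part~(2) is not ``immediate'' from the growth dichotomy as you claim.
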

 Among their conclusions is that one should not expect that automorphic forms on higher rank with Fourier coefficients of faster than polynomial growth.

\subsubsection{Holomorphy and cohomology}
\label{mockaut}

Another particularity of $SL_2$ (or $GL_2$) is the notion of holomorphy. Already for $GL_3$ the associated symmetric space is not a complex manifold, hence the definition of mock modular form as the holomorphic part of a Maa{\ss} form on say, on $SL_3(\R)$ needs to be adjusted.

Now, holomorphic automorphic forms are obtained from holomorphic discrete series on $G$ when they exist. By Harish-Chandra we know that a linear connected semisimple Lie group has discrete series if and only if the rank$(G)=\text{rank}(K)$ (which is equivalent to the existence of a compact Cartan subgroup). Indeed, $Sp_{2n}(\R)$ does have holomorphic discrete series representations for all $n$, but $SL_n(\R)$ does not for $n>2$. 

On the other hand, all discrete series representations are cohomological. Recall that $(\g,K)$-cohomology is computed from the chain complexes $C^i(\g;\mathfrak k, V)=\mathrm{Hom}_\mathfrak k(\wedge^i(\g/\mathfrak k),V)$, and the $(\g,K)$-module $V$ is called cohomological if there exists a finite-dimensional representation $M_\lambda$ of $G$, or equivalently, a $(\g,K)$-module $M_\lambda$ such that $H^*(\g,K,V\otimes M_\lambda)$ is nonzero.

Thus a possible candidate for a mock automorphic form is the following decomposition:
\begin{defi}
\label{mockautdef}
Let $\pi$ be a generic\footnote{This is simply to guarantee the existence of a Whittaker model; we expect that this condition can be relaxed.} automorphic representation on $G$. Suppose that for any $F\in\pi$ there is a decomposition
\[
F =F^+ + F^-
\]
in terms of the Fourier-Whittaker expansion of $F$, where $F^+$ is generates a (possibly reducible) cohomological $(\g,K)$-module. Then $F^+$ is called a mock automorphic form, and $F^-$ the completion of $F^+$. 
\end{defi}

\begin{rem}
The definition given above allows for growing Fourier coefficients, and generalizations the notion of a mock modular form being the `holomorphic part' of a non-holomorphic function. On the other hand, it is clear that this definition is lacking in that the completion $F^-$ is not explicitly given, for example in terms of differential operators, and hence there as yet no shadow associated to it.
\end{rem}

We verify this in the usual case of weak harmonic Maa{\ss} forms on $SL_2(\R)$, namely, we show that the mock modular forms of Definition \ref{mmfdef} support cohomology. 

\begin{thm}
Let $F^+$ be a mock modular form on $G=SL_2(\R)$, and $V_{F^+}$ be the $(\g,K)$-module generated by $F^+$. Then $H^i(\g,K;V_F)\neq0$ for some $i$.
\end{thm}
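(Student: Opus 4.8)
The plan is to pin down the $(\g,K)$-module $V_{F^+}$ explicitly and then read off its $(\g,K)$-cohomology from the known cohomology of the standard modules of $SL_2(\R)$. First I would lift $F^+$ to a function $\widetilde{F^+}$ on $G$ as in (\ref{kfinite}). Since a mock modular form is holomorphic on $\HH$, it is annihilated by the weight-$k$ Maass lowering operator $L_k=-2iv^2\,\partial/\partial\bar\tau$; hence $\widetilde{F^+}$ is a nonzero lowest-weight vector, namely it has $K$-type $e^{ik\theta}$ and is killed by the weight-lowering element of $\g_{\C}$. Moreover, harmonicity $\Delta_kF=0$ fixes the infinitesimal character of $V_{F^+}$ to be that of a weight-$k$ holomorphic form, which is nonsingular precisely because $k\neq 1$ --- this being exactly why Definition \ref{mmfdef} excludes $k=1$, as the limit of discrete series $D_1^+$ has singular infinitesimal character and is not cohomological. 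Consequently $V_{F^+}$ is a nonzero cyclic quotient of $N(k)$, the universal $(\g,K)$-module generated by a lowest-weight vector of weight $k$ (the module with $K$-types $k,k+2,k+4,\dots$). By the $SL_2$ analysis underlying \cite[Theorem 5.2]{BK}, $N(k)$ is irreducible and equal to the (limit of) holomorphic discrete series $D_k^+$ when $k\geq 2$, while for $k\leq 0$ it sits in an exact sequence
\be
0 \longrightarrow D_{2-k}^+ \longrightarrow N(k) \longrightarrow F_{1-k} \longrightarrow 0,
\ee
with $D_{2-k}^+$ a holomorphic discrete series and $F_{1-k}$ the $(1-k)$-dimensional irreducible representation of $SL_2(\R)$.

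Hence $V_{F^+}$ lies in a short list: it is $D_k^+$ for $k\geq 2$, and for $k\leq 0$ it is either the finite-dimensional $F_{1-k}$ or the full module $N(k)$ (the submodule $D_{2-k}^+$ cannot occur, being not a quotient of $N(k)$). I would then exhibit nonvanishing cohomology in each case. For $k\geq 2$, the theorem of Borel and Wallach gives $H^1(\g,K;D_k^+\otimes M_{k-2})\cong\C$, where $M_{k-2}$ is the irreducible representation of highest weight $k-2$, the finite-dimensional module with the matching infinitesimal character. If $V_{F^+}=F_{1-k}$, then $H^0(\g,K;F_{1-k}\otimes F_{1-k}^{*})\neq 0$ since it contains the class of the identity endomorphism. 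In the remaining case $V_{F^+}=N(k)$ with $k\leq 0$: tensor the displayed sequence by the finite-dimensional (hence flat) module $M_{-k}$, which satisfies $M_{-k}\cong F_{1-k}$ and has the same infinitesimal character as $D_{2-k}^+$, and run the long exact sequence in $(\g,K)$-cohomology. By Wigner's lemma $H^*(\g,K;M_{-k}\otimes M_{-k})$ collapses to $H^*(\g,K;\C)$, which is $\C$ in degree $2$, whereas discrete-series cohomology $H^*(\g,K;D_{2-k}^+\otimes M_{-k})$ is concentrated in degree $1$; the long exact sequence then forces $H^2(\g,K;N(k)\otimes M_{-k})\cong\C$. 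In every case $V_{F^+}$ is cohomological, which is the assertion of the theorem.

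The one step deserving care is the first paragraph: one must check that holomorphy really forces $\widetilde{F^+}$ to be a lowest-weight vector at the correct infinitesimal character --- so that the only reducible standard module that can arise is the two-step $N(k)$ --- and that the only proper nonzero quotient of $N(k)$ is then $F_{1-k}$. This is the content of the $SL_2$ classification in \cite{BK}, but it is worth making explicit in the mock setting, where $\widetilde{F^+}$ is only part of the weak harmonic Maass form $\widetilde F$ and is not itself $\Gamma$-automorphic. The subsequent cohomology bookkeeping is routine. One caveat worth recording in a remark: the nonvanishing genuinely uses a nontrivial coefficient module $M_\lambda$ when $V_{F^+}=D_k^+$ with $k>2$ (indeed $H^*(\g,K;D_k^+)=0$ for trivial coefficients, as the infinitesimal character of $D_k^+$ then differs from that of the trivial representation), whereas with trivial coefficients one still obtains $H^2(\g,K;V_{F^+})\neq 0$ in the reducible case $V_{F^+}=N(k)$, $k\leq 0$.
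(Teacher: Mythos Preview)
Your argument is correct and follows a genuinely different route from the paper's. The paper proceeds geometrically: observing that $F^+$ is holomorphic, it realizes $V_{F^+}$ inside the holomorphic sections of a homogeneous line bundle $\LL_\lambda$ on $G/T\simeq\HH$ and concludes by exhibiting the nonzero class in $H^{0}(G/T,\LL_{F^+})$ via Dolbeault cohomology. You instead classify $V_{F^+}$ algebraically as a cyclic quotient of the universal lowest-weight module $N(k)$---hence $D_k^+$ for $k\ge 2$, and either $F_{1-k}$ or $N(k)$ itself for $k\le 0$---and then check nonvanishing of $H^{*}(\g,K;V_{F^+}\otimes M_\lambda)$ case by case using Borel--Wallach for the discrete series and a long exact sequence for the reducible $N(k)$.

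What your approach buys is explicitness: it pinpoints exactly which finite-dimensional twist $M_\lambda$ is needed in each case, it explains structurally why $k=1$ is excluded (the limit of discrete series has singular infinitesimal character and is not cohomological), and it handles the genuinely reducible possibility $V_{F^+}=N(k)$ for $k\le 0$ on equal footing with the irreducible cases. The paper's argument is terser but leans on the passage from $(\g,K)$-cohomology to Dolbeault cohomology of the associated bundle, which it does not spell out; your module-theoretic case analysis bypasses that identification entirely. Your closing caveat---that for $D_k^+$ with $k>2$ one genuinely needs a nontrivial coefficient module, whereas the theorem as literally stated has trivial coefficients---is a point the paper's formulation glosses over and is worth flagging.
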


\begin{proof}
 The $(\g,K)$-module generated by $F^+$ is a submodule of the space of $K$-finite vectors in $C^\infty(G)$, as property (\ref{kfinite}) is preserved. In contrast to the harmonic Maa{\ss} forms, they need not be modular with respect to $\Gamma$.
 
Let $T$ be a Cartan subgroup of $K$ and $G$, and $\delta$ be half the sum of positive roots of $T$. Since $V_{F^+}$ descends to a holomorphic line bundle on $G/T$, it will suffice to work with Dolbeault cohomology instead. Let $\Lambda$ be a Harish-Chandra parameter, and set $\lambda=\Lambda-\delta$ and $e^\lambda$ the corresponding character of $T$. To this data we associate the homogeneous complex line bundle $\LL_\lambda$ on the complex manifold $G/T$.
 
The bundle $\LL_\lambda$ admits non-zero sections, and in fact $H^{0}(G/T,\LL_\lambda)$ can be viewed as holomorphic functions on the unit disk. Taking $\lambda=0$, we obtain the a sub-bundle $\LL_{F^+}$ of $\LL_0$ generated by $F^+$, and passing to Dolbeault cohomology we get $H^{0}(G/T,\LL_{F^+})\neq 0$.
\end{proof}

\begin{rem}
Note that the more general definition of mock modular form described in Remark \ref{DMZmmf} characterizes the functions by the image of their completion under the Dolbeault operators $\bar{\partial}$, giving further evidence for the our cohomological candidate.
\end{rem}

This should also be possible with the Siegel mock modular forms of \cite{WR}.  On the other hand, from the results of Miller and Trinh we immediately deduce the following:

\begin{prop}
Let $F^+$ be a mock automorphic form on $SL_3(\R)$, with $F^-$ nonzero and $F$ a weight 0 Maa{\ss} form. Then $F^+$ is not automorphic.
\end{prop}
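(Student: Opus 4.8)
The plan is to argue by contradiction from Proposition \ref{MT}(2): assuming $F^+$ is automorphic, I will force $F^-=0$. First I would collect the growth inputs. As a weak Maa{\ss} form, $F$ has at most exponential growth at the cusps of $SL_3(\Z)\backslash SL_3(\R)/SO(3)$, so Proposition \ref{MT}(2) applies and only Jacquet's Whittaker function occurs in the Fourier--Whittaker expansion of $F$. If $F^+$ were automorphic, then by (\ref{mwconj}) it would have moderate growth, hence in particular at most exponential growth, so Proposition \ref{MT}(2) would apply to $F^+$ as well and only Jacquet's Whittaker function would occur in its expansion; subtracting, the Whittaker expansion of $F^-=F-F^+$ would also be supported on Jacquet's Whittaker function alone.

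The second step is to feed in the structure of the completion. In the $SL_2$ template of Definition \ref{mmfdef}, the completion $F^-$ is assembled from the solutions complementary to the holomorphic ($q$-power, rapidly decaying) ones --- the term $v^{1-k}$ and the incomplete Gamma contributions $W_k(2\pi n v)q^n$ --- that is, from Whittaker functions other than Jacquet's. Carrying this over, I would take the completion $F^-$ attached to a mock automorphic form to be, by the way the Fourier--Whittaker expansion of $F$ is split in Definition \ref{mockautdef}, a (nonzero) combination of the Whittaker functions other than Jacquet's. Since these are linearly independent from Jacquet's Whittaker function inside the finite-dimensional space of Whittaker functions with the infinitesimal character of $F$, the two conclusions about $F^-$ are compatible only when $F^-=0$, contradicting the hypothesis. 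Hence $F^+$ is not automorphic.

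The main obstacle is exactly the second step: as the remark following Definition \ref{mockautdef} stresses, the completion $F^-$ is not pinned down --- there is as yet no shadow --- so the statement that $F^-$ is expanded purely in non-Jacquet Whittaker functions must either be built into the definition in the $SL_3$ setting or deduced from the requirement that $F^+$ generate a cohomological $(\g,K)$-module while $F$ be a bona fide weak Maa{\ss} form (equivalently, that the ``mock'' piece carry the non-moderate growth). Two smaller points to verify are that ``automorphic'' is used in the moderate-growth sense of (\ref{mwconj}) throughout, and that Proposition \ref{MT}(2) is being applied within its stated hypotheses --- for weight $0$ this is precisely the $SO(3)$-spherical situation of \cite{MT}, so this causes no difficulty.
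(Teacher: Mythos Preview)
Your argument is essentially the contrapositive of the paper's own proof and rests on the same input, Proposition~\ref{MT}. The paper argues directly: $F$ is of moderate growth with only Jacquet's Whittaker function $W_\lambda$ in its expansion, so once $F^-\neq 0$ the Fourier--Whittaker expansion of $F^+$ is a genuinely different linear combination, forcing growing Fourier coefficients and hence non-automorphy. You instead assume $F^+$ automorphic, apply Proposition~\ref{MT}(2) to both $F$ and $F^+$, and deduce $F^-=0$ by linear independence of Whittaker solutions. These are logically equivalent routes.

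Where your write-up is more careful than the paper's is in isolating the real hypothesis both arguments need: that the split $F=F^++F^-$ in Definition~\ref{mockautdef} genuinely separates Whittaker types, so that $F^-$ is supported on the non-Jacquet solutions. The paper's proof uses this implicitly in passing from ``$F^+$ is a different linear combination'' to ``$F^+$ has growing Fourier coefficients''; you name it explicitly as the main obstacle. That is an accurate diagnosis---without this structural assumption on the decomposition, neither argument closes, since one could otherwise have $F^+$ and $F^-$ both supported on $W_\lambda$ with different coefficients and both of moderate growth. Your proposal is therefore sound modulo exactly the gap you identify, and matches the paper's intended reasoning.
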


\begin{proof}
From Proposition \ref{MT} we know that $F$ is of moderate growth, and moreover it is a multiple of Jacquet's Whittaker function $W_\lambda$, where $\lambda$ is any spectral parameter in general position. Then since $F^-$ is nonzero by assumption, thus the Fourier expansion of $F^+$ consists of a different linear combination. It therefore must have growing Fourier coefficients, and hence not be automorphic.
\end{proof}

For higher weight Maa{\ss} forms, one may consider the finite $K=SO(3)$-types given by Wigner functions, as in \cite{BM} for example, from which one can deduce the relevant $(\g,K)$-module structures.  We also note that the only cohomological representations on $SL(3)$ arise as $SL(2)$ discrete series representations induced along the $(2,1)$-parabolic. This suggests that the proper mock automorphic forms on $SL(3)$ should arise from a similar cohomological induction of mock modular forms.


\section{BPS states and automorphic forms}
\label{BPSsection}

In this section we provide an impressionistic view of the connection between automorphic forms and black holes. We refer to \cite{DMZ} for a more detailed discussion from an analytic point of view. 

\subsection{BPS states}

Physically, we confine ourselves mainly to compactifications of a Type II string on a Calabi-Yau variety $X$, i.e., a smooth projective variety over $\C$ with trivial canonical bundle, giving rise to an effective supersymmetric gravitational theory in 4 dimensions. That is, starting with 10-dimensional Lorentzian space time, we compactify to 4-dimensions
\[
M_{10} \to \R^{1,3} \times X
\]
where $\R^{1,3}$ is Minkowski spacetime. The number of preserved supersymmetries, denoted $N$, depends on the properties of $X$. As such, we will be interested in the moduli space of Calabi-Yau varieties. 

Now, given an integral homology class  $[\gamma]\in H_3(X,\Z)$. String theory associates to $(X,\gamma)$ the space $H(\gamma)$ of BPS states of charge $\gamma$. This has not been formulated rigorously, but is expected to be a finite-dimensional graded Hermitian vector space. According to \cite[\S2]{MM}, it should be defined as follows: let $M(\gamma)$ be the moduli space of $(\Sigma, A)$ where $\Sigma\subset X$ is a smooth Lagrangian submanifold in the class $\gamma$, with $A$ a flat $U(1)$-connection on it. Then $H(\gamma)$ is the $L^2$-cohomology of $M(\gamma)$. Alternatively, in the mirror formulation, one may take $M(\gamma)$ to be the moduli space of coherent sheaves on the mirror Calabi-Yau, with Chern classes given by $\gamma$.

Then the BPS index $\Omega(\gamma) = \dim H(\gamma)$ is a function 
\[
\Omega:H^3(X,\Z) \to \Z
\]
counting the signed degeneracies of BPS black holes with charged vector $\gamma$. It provides a microscopic description of blackhole entropy by the Boltzmann formula $S(\gamma) \sim \log \Omega(\gamma)$. The BPS index are arranged into a partition function, i.e., a generating series roughly of the form
\[
Z_\mathrm{string}(\Omega) = \sum_\gamma \Omega(\gamma) q(\gamma)
\]
where the parameter $q$ is given by the relevant Fourier expansion.

\subsection{Relation to automorphic forms}

Referring to \cite[\S13.4]{FGKP} and the references therein for details, we are led to automorphic forms as in the following examples.

\subsubsection{The $N=4$ case.}\label{K3T2Siegel}

Taking first $X=K3\times T^2$, where $K3$ is a compact $K3$ surface, yielding $N=4$ supersymmetry and $\frac12$ of BPS blackholes preserving half of the supersymmetries of the theory, and respectively $\frac14$-BPS states preserving a quarter. The symmetry in this case is given by $SL_2(\Z)\times SO(6;22,\Z)$,

The $\frac12$-BPS states are given by $\gamma=(p,0)$ or $(0,q)$, and are counted by $\Omega(q,0)=d(q^2/2)$, where $d(n)$ is the usual divisor function. They are Fourier coefficients of the modular form
\[
\Delta(\tau) =\eta(\tau)^{-24} = \sum_{n=1}^\infty d(n) e(\tau)
\]
where $e(\tau)=e^{2\pi i \tau}$, $\eta$ is the Dedekind eta function, and $\tau\in\HH$. Note that this can also be viewed as counting $\frac18$-BPS states  in an $X=T^6$ and $N=8$ theory.

The $\frac14$-BPS state count $\Omega_\frac14(p,q)=d(\frac{q^2}{2},\frac{p^2}{2},pq)$ where $d(m,n,l)$ are Fourier coefficients of a meromorphic Siegel modular form
\[
\label{igusa}
\Phi_{10}(\rho,\sigma,\tau)^{-1}=\sum_{m,n,l}d(m,n,l)e(m\sigma+n\tau+l\rho)
\]
where $\Phi_{10}$ is the Igusa cusp form, the unique weight 10 cusp form on $Sp_4(\Z)$, and $(\rho,\sigma,\tau)$ are coordinates on the Siegel upper-half plane. Its Fourier-Jacobi expansion was studied extensively in \cite{DMZ}, and in which certain (mixed) mock modular forms were found, and relations to certain wall-crossing phenomena.  For example, the poles of the meromorphic Siegel modular form $1/\Phi_{10}$ from (\ref{igusa}) detect this wall-crossing: by the Fourier-Jacobi expansion
\[ \sum_{n=-1}^\infty  \psi_m(z;\tau) q^{\sigma m} \]
where the $\psi_m(z;\tau)$ are (meromorphic) Jacobi forms, which decompose into a polar and finite part, $ \psi_m = \psi^P_m + \psi^F_m$, the polar part being completely determined by the poles of $\psi_m$, and the finite part is a finite linear combination of (mixed) mock modular forms multiplied by Jacobi theta functions
\[ \psi^F_m(z;\tau) = \sum_{l \text{ mod }2m} f^*_{m,l} (\tau) \theta_{m,l}(z;\tau) \]
using the finite-part we count the degeneracies of single-centered black holes, while the polar part detects the wall crossing, and hence the two centered-black holes.

\subsubsection{The $N=2$ case.}

Now take $X$ to be a Calabi-Yau threefold, in which case $N=2$. In this case the symmetry group $G(\Z)$ is not known in general, except that it should contain $SL_2(\Z)$. The Kontsevich homological mirror symmetry conjecture implies that $\gamma\in H^3(X,\Z)$ can be viewed as a semistable object, i.e., a special Lagrangian, in the derived Fukaya category D$^b$Fuk$(X)$ carrying an action of $G(\Z)$, and that the index $\Omega$ should be identified with the generalized Donaldson-Thompson invariants of $X$. 

Wall-crossing phenomena in this case are related to jumps in the Donaldson-Thomas invariants, and known to be much more complicated. At the same time, the degeneracies do not change under wall-crossing and is expect to have (mock) modular properties. 

If $X$ is rigid in the sense that its Hodge number $h_{1,2}=0$ and CM in the sense that it has complex multiplication by the ring of integers $\mathcal O_K$ of an imaginary quadratic extension $K=\Q(\sqrt{-D}), D>0$. Then it is conjectured that 
\be
\label{SU21}
G(\Z) = SU(2,1) \cap GL_3(\mathcal O_K),
\ee
known as the Picard modular group. Further it is conjectured, that $\Omega(\gamma)$ is given by the Fourier coefficients of an automorphic form attached to the quaternion discrete series on $SU(2,1)$.

If $X$ instead has Hodge number $h_{1,1}=1$, then it is conjectured that
$G(\Z) = G_2(\Z)$
and $\Omega(\gamma)$ is given by the Fourier coefficients of an automorphic form attached to the quaternion discrete series on $G_2$. Most importantly, it was observed in \cite[Remark 13.9]{FGKP} that the physics expects the coefficients to have exponential growth,
\be
\label{expgrowth}
\Omega(\gamma) \sim \exp(\pi\sqrt{Q_4(\gamma)})
\ee
as $\gamma$ grows large, and $Q_4(\gamma)\geq0$ is a quartic polynomial in $\gamma$. The authors mention a suggestion of Stephen D. Miller that $\Omega(\gamma)$ should instead be related to an analogous mock modular form on $G_2$.

In fact, this was our point of departure for this paper. Up until now, mock modular forms have been defined for $SL_2$, $Sp_4$, and the Jacobi group $SL_2\rtimes H_3$, the latter obtained from the Fourier-Jacobi expansion of the Siegel modular forms (\ref{igusa}). 


%



\subsection{BPS jumping loci}
\label{BPSintro}

Another motivation for considering mock automorphic forms arises in the notion of BPS jumping loci, introduced in the recent papers of Kachru and Tripathy. Namely, the symmetry group $G(\Z)$ depends on the choice of $X$, referred to as the compactification, and hence the moduli space of Calabi-Yau manifolds. They study jumping behaviour of BPS state counts as one moves around the moduli space, separate from the phenomenon of wall-crossing. In \cite[(a)]{KT}, the authors focus on so-called Type 2 jumping, upper semi-continuous jumps parallel to cohomology jump loci. 



\subsection{$N=4$}

Focusing on the case $X=K3\times T^2$, one considers the moduli space of $K3$ surfaces $\M_{K3}$ and of elliptic curves $\M_T$ respectively. In $\M_{K3}$, the BPS jumping loci coincide with Noether-Lefschetz divisors, which detect jumps in the Picard rank of $K3$, i.e., the rank of the sub-lattice in $H_2(K3,\Z)$ that can be realized as a linear combination of curves.

Following Moore \cite{M2}, the attractor mechanism of Ferrara, Kallosh, and Strominger defined by $\gamma$ gives rise to a dynamical system in $\M_{K3}$ and a fixed point. This point is shown to be a Shioda-Inose K3 surface associated to an even quadratic form \cite[Corollary 4.4.1]{M2}. Moreover, the quadratic form also defines a CM elliptic curve, associated to an attractor point in $\M_{T}$. Then \cite{KT2} show that the sum over counts of these attractor black holes coincides with the mock modular form
\[
\sum_n H(n) q^n
\]
of weight $\frac32$ on the congruence subgroup $\Gamma_0(4)\subset SL_2(\Z)$, where $H(n)$ are the so-called Hurwtiz class numbers, first studied by Zagier \cite{Zag} (see also \cite{B}). Indeed, Maulik and Pandharipande \cite{MP} show that the Noether-Lefschetz divisors are certain Heegner divisors on the symmetric space associated to $O(2,19)$, which were previously shown by Borcherds \cite{Bor} to be related to Fourier coefficients of a vector-valued modular form on $SL_2(\Z)$.

More generally, the moduli space of $K3$ compactifications are described by locally symmetric spaces associated to indefinite orthogonal groups
\[
\M_{a,b} = O(a,b,\Z) \backslash O(a,b,\R) / ((O(a,\R)\times O(b,\R)).
\]
When $a=2$, these are Shimura varieties, and the Noether-Lefshcetz loci are special Shimura subvarieties \cite{KT}. The case above corresponds to $(a,b)=(2,1)$, while \cite[\S3]{KT3} concerns the case $(2,2)$, giving a mock modular form first obtained by Hirzebruch and Zagier \cite{HZ}. In the case of Shimura varieties, this aligns with the philosophy of Kudla and Millson \cite{KM}, which roughly states that a generating series of the form
\be
\label{generating}
\phi(q) = \sum_n Z_nq^n 
\ee
where $Z_n$ are certain special cycles in $\M_{a,b}$, can be shown to be modular forms valued in the Chow group CH$(\M_{a,b})$ by means of an arithmetic theta lfiting. Most relevant to our discussion are the cases where $\M_{a,b}$ is sufficiently noncompact, so that modularity fails and instead one should consider mock automorphic forms, as suggested in \cite[\S5.1]{KT3}

The moduli space, up to a discrete action, factors as moduli spaces of certain vector multiplets and hypermultiplets
\be
\M_V\times \M_H
\ee
In the specific case where $X$ is a rigid CM Calabi-Yau threefold, the symmetry group $SU(2,1)$ from (\ref{SU21}), we have again a Shimura variety, this the BPS jumping loci in this case will again be special cycles, and by the method of Kulda we again obtain a modular generating series of special cycles.

More generally, one can speculate on the (mock) automorphy of the generating series as in (\ref{generating}), where now the $Z_n$ index the BPS jumping loci, or Noether-Lefschetz divisors in the moduli space $\M$. Though more importantly, it is not yet clear what the physical implications are regarding the automorphy of these generating series themselves, rather than the usual BPS indices.

\subsection{$L$-functions and converse theorems}
\label{Lfunsection}

Finally, we close with a brief discussion of the question \cite[\S11]{M1}: is there a natural role for $L$-functions in BPS state counting problems? In \cite{MM} the authors study the arithmetic height of the certain attractor varieties, which can be estimated by the distribution of zeroes of certain Dirichlet $L$-functions.

Consistent with the $L$-function modular form, we introduce a perhaps more a natural, though possibly naive, $L$-function for the BPS index. Namely, given the Fourier expansion of a modular form $f(q) = \sum a(n) q^n
$ we have the $L$-function
\be
L(s,f) = \sum_{n=0}^\infty \frac{a(n)}{n^s} 
\ee
which converges for Re$(s)$ large enough. For example, in the case $N=4$ and $\frac12$-BPS state counts, we have easily
\be
L(s,\Omega) = L(s,\Delta) = \sum_{n=1}^\infty \frac{d(n)}{n^s}
\ee
the $L$-function of $\Delta$. More generally, if one expects $\Omega(\gamma)$ to be a modular with respect to $SL_2(\Z)$ or a congruence subgroup thereof, one might hope to apply the converse theorem\cite{W}, which says that if $L(s,\Omega)$ is `nice' in the sense that it (i) has meromorphic continuation to $\C$, (ii) is bounded in vertical strips, and (iii) satisfies a certain functional equation, then $Z(\Omega)$ must be modular.

In the case of Siegel modular forms of genus 2 a converse theorem also exists, due to Imai \cite{Imai}, but not for meromorphic Siegel modular forms, so that the converse theorem would be applied to the reciprocal of the generating function to give
\be
L(s,\Omega^{-1}) = L(s,\Phi_{10}).
\ee
In particular, this gives reason to look for functional equations and meromorphic continuation of the $L$-series associated to BPS indices $L(s,\Omega)$, as a possible means of proving the modularity of such generating series.

\subsection*{Acknowledgments} The author thanks Ivano Lodato for enlightening conversations regarding BPS states; and Ramesh Chandra Ammanamanchi and Yashaswika Gaur for providing the impetus for this project.


\end{document}